\documentclass[a4paper,10pt]{article}
\usepackage[utf8]{inputenc}
\usepackage{mathrsfs,pgf,tikz,amsmath,amssymb,hyperref,amsthm,stmaryrd}
\usepackage{colortbl}
\usepackage{enumitem}
\usetikzlibrary{arrows}

\theoremstyle{plain}
\newtheorem{theorem}{Theorem}[section]                                          
\newtheorem{proposition}[theorem]{Proposition}                          

\newtheorem{corollary}[theorem]{Corollary}

\theoremstyle{definition}
\newtheorem{definition}[theorem]{Definition}
\theoremstyle{remark}
\newtheorem{remark}[theorem]{Remark}
\newtheorem{example}[theorem]{Example}

\DeclareMathOperator{\p}{P}

\bibliographystyle{plain}

\title{On minimal Besicovitch arrangements}
\author{Blondeau Da Silva St\'{e}phane}

\begin{document}
 
\maketitle
 
\begin{abstract}
In this paper we focus on minimal Besicovitch arrangements to highlight some of their properties. An appropriate probability space enables us to find 
again in an elegant way some straightforward equalities associated with these arrangements. Resulting inequalities are also brought out. A connection 
with arrangements of lines in $\mathbb{R}^2$ is eventually made, where possible. 
\end{abstract}

\section*{Introduction}

Let ${\mathbb{F}_q}$ be the $q$ elements finite field where $q$ is a prime power. A Besicovitch arrangement $B$ is a set of lines that contains at least 
one line in each direction. A minimal Besicovitch arrangement is a Besicovitch arrangement that is the union of exactly $q+1$ lines in ${\mathbb{F}_q}^2$ 
(see \cite{ste}).

Blondeau Da Silva proved (in \cite{ste})that the expected value of the number of points of multiplicity $m$ in ${\mathbb{F}_q}^2$, for $0\le m\le q+1$, 
with respect to a randomly chosen arrangement of $q+1$ lines with different slopes: $\big(1/(m!e)\big)q^2+O(q)$, as $q\rightarrow\infty$. He also 
demonstrated that the distance between the number of points in such a randomly chosen arrangement and $\big(1/(m!e)\big)q^2$ is lower than $q\ln q$ 
with probability close to $1$ for large $q$. 

In the first section of the paper, we take advantage of the probability space defined by Faber (in \cite{Faber}) and also of certain specific random variables to 
identify some of the constraints that such an arrangement is subject to. Equalities and inequalities between the multiplicities of all points in 
${\mathbb{F}_q}^2$ determined by this arrangement will be brought to light. Other properties of these arrangements will be emphasised in the second 
section, allowing us to make a connection with quantities specific to arrangements of $q+1$ lines in $\mathbb{R}^2$. 

\section{Some constraints minimal Besicovitch arrangements are subject to}\label{2}

\subsection{The chosen probability space}

Let $q$ be a prime power and ${\mathbb{F}_q}$ be the $q$ elements finite field.

A line, in ${\mathbb{F}_q}^2$, is a one-dimensional affine subspace. For $s\in{\mathbb{F}_q}\cup\{\infty\}$, $b\in{\mathbb{F}_q}$, let $l(s,b)$ denote the 
line :
\[\left \{\begin{array}{l @{\qquad \text{if }\enspace} l}
    y=sx+b & s\in{\mathbb{F}_q}, \\
    x=b & s=\infty.
\end{array}
\right.\]

Recall from \cite{Faber} that a Besicovitch set in ${\mathbb{F}_q}^2$ is a set of points $E\subset{\mathbb{F}_q}^2$ such that:
\begin{center}
$\forall i\in{\mathbb{F}_q}\cup\{\infty\},\quad\exists b_i\in{\mathbb{F}_q}\quad\text{so that } l(i,b_i)\subset E.$
\end{center}

Let $B$ be a Besicovitch arrangement. Henceforth we denote by $\tilde{B}$ the Besicovitch set formed by all the points belonging to the lines of the 
Besicovitch arrangement $B$.

We define our probability space $\Omega$, as was done by Faber (in \cite{Faber}), \textit{i.e.}: $\Omega=\bigoplus_{i\in{\mathbb{F}_q}\cup\{\infty\}}{\mathbb{F}_q}$, 
assigning probability $q^{-(q+1)}$ to each element in $\Omega$. The associated probability law is denoted by $\p$. 

From now on, we identify each element $\sum_{i\in{\mathbb{F}_q}\cup\{\infty\}} b_i$ in $\Omega$ with the minimal Besicovitch arrangement 
$B=\bigcup_{i\in{\mathbb{F}_q}\cup\{\infty\}}l(i,b_i)$.

For a Besicovitch arrangement $B$, for $0\le m\le q+1$, we define $x_m^B$ as the number of points in ${\mathbb{F}_q}^2$ through which exactly $m$ lines 
pass, \textit{i.e.} the number of points of multiplicity $m$ in ${\mathbb{F}_q}^2$.

For $0\le m\le q+1$, let us denote by $X_m$ the random variable from $\Omega$ to $\mathbb{N}$ that maps each element $\sum_{i\in{\mathbb{F}_q}\cup
\{\infty\}} b_i$ to $x_m^B$, where $B=\bigcup_{i\in{\mathbb{F}_q}\cup\{\infty\}}l(i,b_i)$.

\subsection{The three equalities}

We can take advantage of the probability space defined above to bring out some equalities associated with minimal Besicovitch arrangements. 
To this end, we will reuse the notations of the proof of \cite[Theorem 1.]{ste}.\\
For $P$ in ${\mathbb{F}_q}^2$, let $M_{P}$ be the random variable that maps $B\in\Omega$ to the multiplicity of $P$ in $B$. For $0\le m\le q+1$, let 
$f_{m,P}:\Omega\rightarrow\mathbb{R}$ be the random variable defined by:
\[f_{m,P}(B)=\left \{
\begin{array}{c @{\quad} l}
    1 & \text{if }\enspace M_{P}(B)=m, \\
    0 & \text{otherwise.}
\end{array}
\right.\]
It follows that for $0\le m\le q+1$ and for $B\in\Omega$:
\begin{equation}\label{defX}
X_m(B)=\sum_{P\in{\mathbb{F}_q}^2}f_{m,P}(B).
\end{equation}

\subsubsection{The first equality}

We first consider the random variable $S$ that maps $B\in\Omega$ to $\frac{1}{q^2}\sum_{m=0}^{q+1}X_m(B)$. We have the following results:
\begin{proposition}\label{Esp''}
$E(S)=1\quad$ and $\quad Var(S)=0.$
\end{proposition}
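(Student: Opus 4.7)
The plan is to observe that $S$ is in fact a constant random variable, identically equal to $1$ on $\Omega$. Once this is established, both conclusions are immediate: $E(S)=E(1)=1$ and $\mathrm{Var}(S)=\mathrm{Var}(1)=0$.

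To prove that $S\equiv 1$, I would fix an arbitrary $B\in\Omega$ and show $\sum_{m=0}^{q+1} X_m(B)=q^2$. The point is that each $P\in\mathbb{F}_q^2$ has a well-defined multiplicity $M_P(B)$ in the arrangement $B$ (number of lines of $B$ through $P$), and since $B$ consists of exactly $q+1$ lines we have $0\le M_P(B)\le q+1$. Therefore the sets $\{P\in\mathbb{F}_q^2:M_P(B)=m\}$ for $m=0,\dots,q+1$ form a partition of $\mathbb{F}_q^2$. Summing the cardinalities of the parts gives $|\mathbb{F}_q^2|=q^2$, that is, $\sum_{m=0}^{q+1} x_m^B=q^2$.

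Equivalently, using formula \eqref{defX}, one can interchange the order of summation:
\[
\sum_{m=0}^{q+1} X_m(B)=\sum_{m=0}^{q+1}\sum_{P\in\mathbb{F}_q^2}f_{m,P}(B)=\sum_{P\in\mathbb{F}_q^2}\sum_{m=0}^{q+1}f_{m,P}(B)=\sum_{P\in\mathbb{F}_q^2}1=q^2,
\]
since for each $P$ exactly one of the indicators $f_{m,P}(B)$ equals $1$ (the one with $m=M_P(B)$). Dividing by $q^2$ gives $S(B)=1$ for every $B\in\Omega$.

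There is essentially no obstacle here: the proposition is a consequence of the fact that the multiplicities $x_m^B$ partition the point count of the plane, and the probability space plays no role beyond giving $S$ the status of a random variable. The only point worth being careful about is the range of summation $0\le m\le q+1$, which must cover every possible multiplicity in a minimal Besicovitch arrangement; this is ensured by the cardinality $q+1$ of the line set.
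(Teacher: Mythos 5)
Your proof is correct, but it proceeds in the opposite direction from the paper. The paper's proof is genuinely probabilistic: it first computes $E(S)=1$ term by term, using the fact (from the proof of Theorem 1 in the cited reference) that $E(X_m)=\binom{q+1}{m}(1/q)^m(1-1/q)^{q+1-m}q^2$, and then establishes $Var(S)=0$ by a lengthy second-moment computation, expanding $E\bigl((\frac{1}{q^2}\sum_m X_m)^2\bigr)$ into joint probabilities $\p\{f_{i,P}=f_{j,Q}=1\}$ expressed via multinomial coefficients (split according to whether the line $(PQ)$ belongs to $B$, with several boundary cases $i+j\ge q+1$ handled separately), and summing everything to get exactly $0$. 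Only \emph{afterwards} does the paper deduce the pointwise identity $\sum_m x_m^B=q^2$ (its equality (\ref{equ1})) from the vanishing variance, noting that every $B\in\Omega$ has positive probability. You instead prove the pointwise identity directly --- the multiplicity classes $\{P: M_P(B)=m\}$, $0\le m\le q+1$, partition $\mathbb{F}_q^2$, so exactly one indicator $f_{m,P}(B)$ fires per point --- and the expectation and variance statements follow trivially. Your route is shorter, more elementary, and yields the stronger deterministic statement as the primary fact rather than as a corollary; the paper itself concedes the equality is ``trivial'' and based on $|{\mathbb{F}_q}^2|=q^2$. What the paper's computation buys is not the result itself but the machinery: the joint-probability formula and its case analysis are reused verbatim in Propositions \ref{Esp} and \ref{Esp'}, where the analogous constancy of $\overline{M}$ and $V$ is far less obvious, and the fact that the elaborate sum collapses to $0$ serves as a consistency check on those formulas. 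So your proposal is a valid and more direct proof of this particular proposition, though it would not by itself set up the subsequent two propositions the way the paper's argument does.
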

\begin{proof}
We first compute the expected value of $S$:
\begin{align*}
E(S)=\frac{1}{q^2}\sum_{m=0}^{q+1}E(X_m)=\frac{1}{q^2}\sum_{m=0}^{q+1}\big(\binom{q+1}{m}(\frac{1}{q})^m(1-\frac{1}{q})^{q+1-m}\big)q^2=1,
\end{align*}
according to the proof of \cite[Theorem 1.]{ste}.\\
Then we determine the variance of $S$:
\begin{align*}
Var(S)&=Var(\frac{1}{q^2}\sum_{m=0}^{q+1}X_m)=E\big((\frac{1}{q^2}\sum_{m=0}^{q+1}X_m)^2\big)-E(S)^2\\
&=\frac{1}{q^4}\sum_{(i,j)\in\{0,...,q+1\}^2}\sum_{B\in\Omega}\big(\sum_{P\in{\mathbb{F}_q}^2}f_{i,P}(B)\sum_{Q\in{\mathbb{F}_q}^2}f_{j,Q}(B)\big) \p\{B\}-1^2\\
&=\frac{1}{q^4}\sum_{(i,j)\in\{0,...,q+1\}^2}\sum_{B\in\Omega}\sum_{P,Q\in{\mathbb{F}_q}^2}\big(f_{i,P}(B)f_{j,Q}(B) \p\{B\}\big)-1\\
&=\frac{1}{q^4}\sum_{(i,j)\in\{0,...,q+1\}^2}\sum_{P,Q\in{\mathbb{F}_q}^2}\p\{f_{i,P}=f_{j,Q}=1\}-1.
\end{align*}
Recall henceforth multinomial coefficients definition. Let $n$ and $p$ be positive integers. For $i\in\{1,2,...,p\}$, let $k_i$ be positive integers 
such as $\sum_{i=1}^pk_i=n$. We have:
\begin{align*}
\binom{n}{k_1,k_2,...,k_p}=\frac{n!}{k_1!k_2!...k_p!}.
\end{align*}

We note that the multinomial coefficient $\binom{a}{b,c,d}$ (where $(a,b,c,d)\in\mathbb{N^*}\times\mathbb{Z}^3$) will be considered 
to be zero, if $b$, $c$ or $d$ is strictly negative.

We could demonstrate, in the same way as in the proof of \cite[Theorem 1.]{ste}, that for two distinct points $P$ and $Q$ in ${\mathbb{F}_q}^2$ and for 
$(i,j)\in\{1,...,q\}^2$ such that $i+j\le q$:
\begin{center}
$\p\{f_{i,P}=f_{j,Q}=1\}=\binom{q}{i-1,j-1,q-i-j+2}\frac{(q-2)^{q-(i-1)-(j-1)}}{q^{q+1}}+\binom{q}{i, j, q-i-j}\frac{(q-1)(q-2)^{q-i-j}}{q^{q+1}}$.
\end{center}
The first term of the above sum corresponds to the case where $(PQ)\in B$, the second term to the case where $(PQ)\notin B$.\\
Let us consider the cases not examined yet:
\begin{enumerate}[label=$\circ$]
 \item If $i=0$ or $j=0$ (and $i+j\le q$), the above equality is still valid; indeed, for all $B\in\Omega$, $(PQ)\notin B$ (by analogy with the proof of 
\cite[Theorem 1.]{ste}), the first term of the equality is $0$.
 \item If $i+j=q+1$ (respectively $q+2$), for all $B\in\Omega$, the case where $(PQ)\notin B$ (by analogy with \cite[Theorem 1.]{ste}) cannot occur. 
 Otherwise $(PQ)$ would intersect with at least $i+j=q+1$ lines (respectively $q+2$), which is impossible in a minimal Besicovitch arrangement composed 
 of $q+1$ lines. The above equality is still valid, its second term being $0$. Moreover the particular case where $i+j=q+1$ and $i$ or $j$ is $0$ 
 follows the same rule as in the above item.
 \item If $i+j> q+2$, $(PQ)$ would intersect with at least $q+1$ lines ($(i-1)+(j-1)\ge q+1$), which is impossible as we just saw.
\end{enumerate}
Therefore the equality is always valid. Then, since $\p\{f_{i,P}=f_{j,Q}=1\}$ does not depend on $P$ and $Q$ and $\p\{f_{i,P}=1\}$ does not depend on $P$, 
we have: 
\begin{equation}\label{var2}
Var(S)=\frac{q^2-1}{q^2}\sum_{(i,j)\in\{0,...,q+1\}^2}\p\{f_{i,P}=f_{j,Q}=1\}+\frac{1}{q^2}\sum_{i\in\{0,...,q+1\}}\p\{f_{i,P}=1\}-1.
\end{equation}
The first term of (\ref{var2}) cuts into two parts. One checks that the first part is:
\begin{align*}
&(1-\frac{1}{q^2})\frac{1}{q^{q+1}}\sum_{(i,j)\in\{0,...,q+1\}^2}\binom{q}{i-1,j-1,q-i-j+2}(q-2)^{q-(i-1)-(j-1)}\\
&=(1-\frac{1}{q^2})\frac{q^{q}}{q^{q+1}}.
\end{align*}
The second part is:
\begin{align*}
(1-\frac{1}{q^2})\frac{q-1}{q^{q+1}}\sum_{(i,j)\in\{0,...,q+1\}^2}\binom{q}{i,j,q-i-j}(q-2)^{q-i-j}=(1-\frac{1}{q^2})\frac{(q-1)q^{q}}{q^{q+1}}.
\end{align*}
The second term of (\ref{var2}) is, thanks to the value of $\p\{f_{i,P}=1\}$ in the proof of \cite[Theorem 1.]{ste}:
\begin{align*}
\frac{1}{q^2}\sum_{i\in\{0,...,q+1\}}(\binom{q+1}{i}(\frac{1}{q})^i(1-\frac{1}{q})^{q+1-i})=\frac{1}{q^2}
\end{align*}
We finally obtain:
\begin{align*}
Var(S)=(1-\frac{1}{q^2})(\frac{q^{q}}{q^{q+1}}+\frac{(q-1)q^{q}}{q^{q+1}})+\frac{1}{q^2}-1=0.
\end{align*}
\end{proof}
Hence, we can deduce from this proposition that, for all $B\in\Omega$, $S(B)=1$ which can also be written as:
\begin{equation}\label{equ1}
\forall B\in\Omega,\qquad\sum_{m=0}^{q+1}x_m^B=q^2.
\end{equation}
This first trivial equality is based on the fact that $|{\mathbb{F}_q}^2|=q^2$, regardless of the arrangement chosen.

\subsubsection{The second equality}

Let us also consider the random variable $\overline{M}$ that maps $B\in\Omega$ to the mean multiplicity of the points in ${\mathbb{F}_q}^2$, associated 
with the minimal Besicovitch arrangement $B$:
\begin{align*}
\overline{M}=\frac{1}{q^2}\sum_{m=0}^{q+1}mX_m.
\end{align*}
We have the following results:
\begin{proposition}\label{Esp}
$E(\overline{M})=1+\frac{1}{q}\quad$ and $\quad Var(\overline{M})=0.$
\end{proposition}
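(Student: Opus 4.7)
The plan is to follow closely the template of Proposition \ref{Esp''}. For the expected value, I would use linearity of expectation,
\[
E(\overline{M}) = \frac{1}{q^2}\sum_{m=0}^{q+1} m\, E(X_m),
\]
and plug in the expression $E(X_m)=\binom{q+1}{m}(1/q)^m(1-1/q)^{q+1-m} q^2$ recalled from the proof of \cite[Theorem 1.]{ste}. The remaining quantity $\sum_{m=0}^{q+1} m \binom{q+1}{m}(1/q)^m(1-1/q)^{q+1-m}$ is the first moment of a Binomial$(q+1,1/q)$ variable, giving $(q+1)/q = 1 + 1/q$ immediately.

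For the variance, I would expand using (\ref{defX}) exactly as in Proposition \ref{Esp''}:
\[
Var(\overline{M}) = \frac{1}{q^4}\sum_{(i,j)\in\{0,\dots,q+1\}^2} ij \sum_{P,Q\in{\mathbb{F}_q}^2}\p\{f_{i,P}=f_{j,Q}=1\} - \Big(1+\frac{1}{q}\Big)^2.
\]
I would split the inner sum over $P,Q$ into the contribution from $P=Q$ (controlled by $\p\{f_{i,P}=1\}$, and nonzero only on the diagonal $i=j$) and the contribution from $P\neq Q$, for which the formula for $\p\{f_{i,P}=f_{j,Q}=1\}$ already established in Proposition \ref{Esp''} applies. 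The $P\neq Q$ part then decomposes into two weighted multinomial sums corresponding to the alternatives $(PQ)\in B$ and $(PQ)\notin B$.

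The main obstacle is evaluating these weighted multinomial sums, now carrying the extra factor $ij$. The cleanest way is to rewrite $ij=((i-1)+1)((j-1)+1)$ in the first sum (and $ij$ directly in the second), expanding into a handful of terms, each of which reduces via shifts of the summation indices to the basic identity $\sum_{a+b+c=q}\binom{q}{a,b,c}(q-2)^{c}=q^{q}$ used in Proposition \ref{Esp''}. Once everything is collected, the algebra should produce exactly $(1+1/q)^2$, yielding $Var(\overline{M})=0$.

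As a sanity check I would keep in mind the following one-line alternative: $\sum_{m=0}^{q+1} m\, x_m^B$ counts incidences between points of ${\mathbb{F}_q}^2$ and lines of $B$, and since $B$ consists of $q+1$ lines each containing $q$ points this equals $q(q+1)$, so $\overline{M}(B)=(q+1)/q$ is constant; this gives both conclusions at once and also confirms the final numerical value to match against the probabilistic computation. However, since the author systematically favours the probability-space derivation (and uses these propositions to showcase the method), I would present the longer calculation above.
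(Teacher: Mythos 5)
Your proposal is correct and follows essentially the same route as the paper: linearity plus the Binomial$(q+1,1/q)$ mean for $E(\overline{M})$, then the variance expansion into the off-diagonal $P\neq Q$ sums (split over $(PQ)\in B$ versus $(PQ)\notin B$) weighted by $ij=\big((i-1)+1\big)\big((j-1)+1\big)$ and the diagonal term weighted by $i^2$, exactly matching the paper's equation (\ref{var3}) and its multinomial evaluations. Even your ``sanity check'' via counting incidences ($q$ points on each of $q+1$ lines) appears in the paper as the interpretive remark following the proposition.
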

\begin{proof}
Using the proof of \cite[Theorem 1.]{ste}, we first compute the expected value of $\overline{M}$:
\begin{align*}
E(\overline{M})&=\frac{1}{q^2}\sum_{m=0}^{q+1}mE(X_m)=\frac{1}{q^2}\sum_{m=0}^{q+1}m\big(\binom{q+1}{m}(\frac{1}{q})^m(1-\frac{1}{q})^{q+1-m}\big)q^2\\
&=\frac{q+1}{q}\sum_{m=1}^{q+1}\binom{q}{m-1}(\frac{1}{q})^{m-1}(1-\frac{1}{q})^{q-(m-1)}=1+\frac{1}{q}.
\end{align*}
Then we determine the variance of $\overline{M}$:
\begin{align*}
Var(\overline{M})&=Var(\frac{1}{q^2}\sum_{m=0}^{q+1}mX_m)=E\big((\frac{1}{q^2}\sum_{m=0}^{q+1}mX_m)^2\big)-E(\overline{M})^2\\
&=\frac{1}{q^4}\sum_{(i,j)\in\{0,...,q+1\}^2}\sum_{B\in\Omega}\big(\sum_{P\in{\mathbb{F}_q}^2}if_{i,P}(B)\sum_{Q\in{\mathbb{F}_q}^2}jf_{j,Q}(B)\big) \p\{B\}-(1+\frac{1}{q})^2\\
&=\frac{1}{q^4}\sum_{(i,j)\in\{0,...,q+1\}^2}\sum_{P,Q\in{\mathbb{F}_q}^2}ij\p\{f_{i,P}=f_{j,Q}=1\}-(1+\frac{1}{q})^2.
\end{align*}
Using the same reasoning as in Proposition \ref{Esp''}, we have:
\begin{equation}\label{var3}
\begin{split}
Var(\overline{M})=&\frac{q^2-1}{q^2}\sum_{(i,j)\in\{0,...,q+1\}^2}ij\p\{f_{i,P}=f_{j,Q}=1\}\\
&+\frac{1}{q^2}\sum_{i\in\{0,...,q+1\}}i^2\p\{f_{i,P}=1\}-(1+\frac{1}{q})^2.
\end{split}
\end{equation}
The first term of (\ref{var3}) cuts into two parts. Knowing that $ij=(i-1)(j-1)+(i-1)+(j-1)+1$, one checks that the first part is:
\begin{align*}
&(1-\frac{1}{q^2})\sum_{(i,j)\in\{0,...,q+1\}^2}ij\binom{q}{i-1,j-1,q-i-j+2}\frac{(q-2)^{q-(i-1)-(j-1)}}{q^{q+1}}\\
&=(1-\frac{1}{q^2})\frac{1}{q^{q+1}}\big(q(q-1)q^{q-2}+qq^{q-1}+qq^{q-1}+q^q\big)\\
&=(1-\frac{1}{q^2})(\frac{4q-1}{q^2})
\end{align*}
The second part is:
\begin{align*}
&(1-\frac{1}{q^2})\sum_{(i,j)\in\{0,...,q+1\}^2}ij\binom{q}{i, j, q-i-j}\frac{(q-1)(q-2)^{q-i-j}}{q^{q+1}}\\
&=(1-\frac{1}{q^2})\frac{1}{q^{q+1}}\big((q-1)q(q-1)q^{q-2}\big)
\end{align*}
The second term of (\ref{var3}) is, thanks to the value of $\p\{f_{i,P}=1\}$ in the proof of \cite[Theorem 1.]{ste} (knowing that $i^2=i(i-1)+i$):
\begin{align*}
\frac{1}{q^2}\sum_{i\in\{0,...,q+1\}}\big(i(i-1)+i\big)\binom{q+1}{i}(\frac{1}{q})^i(1-\frac{1}{q})^{q+1-i}&=\frac{1}{q^2}\big(\frac{(q+1)q}{q^2}+\frac{q+1}{q}\big)\\
&=\frac{2(q+1)}{q^3}.
\end{align*}
Adding the different terms of (\ref{var3}), we get:
\begin{align*}
Var(\overline{M})=(1-\frac{1}{q^2})(\frac{4q-1}{q^2}+\frac{(q-1)^2}{q^2})+(\frac{2(q+1)}{q^3})-1-\frac{1}{q^2}-\frac{2}{q}=0.
\end{align*}
\end{proof}
Hence, we can deduce from this proposition that, for all $B\in\Omega$, $\overline{M}(B)=1+\frac{1}{q},$ which can also be written as:
\begin{equation}\label{equ2}
\forall B\in\Omega,\qquad\sum_{m=0}^{q+1}mx_m^B=q(q+1).
\end{equation}
This result can simply be interpreted as follows: the sum of the multiplicity of all points in ${\mathbb{F}_q}^2$ is $q(q+1)$, regardless of the arrangement of 
$q+1$ lines chosen. Indeed, there are $q$ points on each of the $q+1$ lines, and therefore, there are $q(q+1)$ points counted with multiplicity.

\begin{remark}
Let us denote by $\mathscr{P}$ the subset of $\mathbb{N}$ constituted of all prime powers. Let us consider, for $P\in{\mathbb{F}_q}^2$, the random variable 
$M_P$ defined in the beginning of the section. Recall that $M_P$ follows a binomial distribution with parameters $q+1$ and $1/q$ (see the proof of 
\cite[Theorem 1.]{ste}), its expected value being $1+\frac{1}{q}$ and its standard deviation being $\sqrt{\frac{(q+1)(q-1)}{q^2}}$. 
$\{M_P:P\in{\mathbb{F}_q}^2\}$ is a set of identically distributed random variables (indeed, the value of $\p\{f_{i,P}=1\}$ doesnot depend on $P$). Let us denote 
by $(S_q)_{q\in\mathscr{P}}$ the family of random variables where, for $q\in\mathscr{P}$:
\begin{align*}
S_q=\frac{(\sum_{P\in{\mathbb{F}_q}^2}M_P)-q^2(1+\frac{1}{q})}{q\sqrt{\frac{(q+1)(q-1)}{q^2}}}=\frac{(\sum_{P\in{\mathbb{F}_q}^2}M_P)-q(q+1)}{\sqrt{q^2-1}}.
\end{align*}
If $\{M_P:P\in{\mathbb{F}_q}^2\}$ is a set of independent random variables, the central limit theorem will state that the indexed family 
$(S_q)_{q\in\mathscr{P}}$ converges in law towards the standard normal distribution.\\
However, Proposition \ref{Esp} shows us that, for all $q\in\mathscr{P}$, $S_q$ is the constant random variable with value $O$. Thus, the random 
variables of $\{M_P:P\in{\mathbb{F}_q}^2\}$ are dependent.
\end{remark}

\subsubsection{The third equality}

Let us eventually consider the random variable $V$ that maps $B\in\Omega$ to the second moment of the multiplicity of the points in ${\mathbb{F}_q}^2$, associated 
with the minimal Besicovitch arrangement $B$:
\begin{align*}
V=\frac{1}{q^2}\sum_{m=0}^{q+1}m^2X_m.
\end{align*}
We have the following results:
\begin{proposition}\label{Esp'}
$E(V)=2+\frac{2}{q}\quad$ and $\quad Var(V)=0.$
\end{proposition}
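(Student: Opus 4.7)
The plan is to follow the blueprint established in the proofs of Propositions~\ref{Esp''} and~\ref{Esp}.

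For the expectation I would use linearity together with the fact (recalled from \cite{ste}) that $E(X_m) = \binom{q+1}{m}(1/q)^m(1-1/q)^{q+1-m}q^2$. Thus $E(V) = \sum_{m=0}^{q+1} m^2 \binom{q+1}{m}(1/q)^m(1-1/q)^{q+1-m}$, which is the second moment of a binomial law with parameters $q+1$ and $1/q$; writing $m^2 = m(m-1)+m$ and applying the binomial theorem gives directly $E(V) = (q+1)q/q^2 + (q+1)/q = 2(q+1)/q = 2+2/q$. As a sanity check, this also admits an elementary geometric reading: $\sum_{P} M_P^2 = \sum_P M_P(M_P-1) + \sum_P M_P$, and since every pair of lines of a minimal Besicovitch arrangement has different slopes, they intersect in a unique point, so the first summand counts ordered pairs of distinct lines, i.e.\ $q(q+1)$, while the second summand equals $q(q+1)$ by equality~(\ref{equ2}), totalling $2q(q+1) = q^2(2+2/q)$.

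For the variance I would mimic the derivation of~(\ref{var2}) and~(\ref{var3}) to obtain
\begin{align*}
Var(V) &= \frac{q^2-1}{q^2}\sum_{(i,j)\in\{0,\dots,q+1\}^2} i^2 j^2\, \p\{f_{i,P}=f_{j,Q}=1\} \\
&\quad + \frac{1}{q^2}\sum_{i\in\{0,\dots,q+1\}} i^4\, \p\{f_{i,P}=1\} - \Big(2+\tfrac{2}{q}\Big)^2,
\end{align*}
with $P\neq Q$ arbitrary distinct points in the first sum. I would then substitute the closed form of $\p\{f_{i,P}=f_{j,Q}=1\}$ established in Proposition~\ref{Esp''}, splitting the double sum into two contributions according to whether $(PQ)\in B$. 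To evaluate each, I would expand $i^2 j^2$ into a linear combination of products of falling factorials, on the one hand as $\big((i-1)(i-2) + 3(i-1) + 1\big)\big((j-1)(j-2) + 3(j-1) + 1\big)$ to match the multinomial $\binom{q}{i-1,j-1,q-i-j+2}$, and on the other hand as $\big(i(i-1)+i\big)\big(j(j-1)+j\big)$ to match $\binom{q}{i,j,q-i-j}$. Each resulting term is summed in closed form via the standard identity $\sum_{u+v+w=q} u(u-1)\cdots(u-a+1)\, v(v-1)\cdots(v-b+1)\binom{q}{u,v,w}(q-2)^w = q(q-1)\cdots(q-a-b+1)\, q^{q-a-b}$, obtained by reindexing and the multinomial theorem. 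The single sum involving $\p\{f_{i,P}=1\}$ is simply the fourth moment of a binomial$(q+1,1/q)$, which is dispatched by the same trick applied to the Stirling expansion $i^4 = i(i-1)(i-2)(i-3) + 6\,i(i-1)(i-2) + 7\,i(i-1) + i$.

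The main obstacle is purely algebraic bookkeeping: after summing roughly a dozen falling-factorial terms and subtracting $(2+2/q)^2$, everything must telescope to $0$. Since Propositions~\ref{Esp''} and~\ref{Esp} proceed by exactly the same scheme, and since the combinatorial reading $\sum_P M_P^2 = 2q(q+1)$ already forces the answer, this step is a verification rather than an exploration.
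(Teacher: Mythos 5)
Your proposal is correct and follows essentially the same route as the paper: the identical variance decomposition (the paper's equation~(\ref{var4})), the same split of the double sum according to whether $(PQ)\in B$, and the very same expansions --- your factored form $\bigl((i-1)(i-2)+3(i-1)+1\bigr)\bigl((j-1)(j-2)+3(j-1)+1\bigr)$ is exactly the paper's nine-term identity for $i^2j^2$, and the expansion $i^4=i(i-1)(i-2)(i-3)+6i(i-1)(i-2)+7i(i-1)+i$ appears verbatim. The only divergence is your ``sanity check'' $\sum_{P}M_P^2=2q(q+1)$, which in fact would prove the whole proposition immediately (it shows $V$ is the constant $2+\frac{2}{q}$ on all of $\Omega$, so $Var(V)=0$ for free); since you use it only as corroboration and carry out the probabilistic computation anyway, the proof as written coincides with the paper's.
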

\begin{proof}
We first compute the expected value of $V$, using the proof of \cite[Theorem 1.]{ste}:
\begin{align*}
E(V)=\frac{1}{q^2}\sum_{m=0}^{q+1}m^2E(X_m)=\frac{1}{q^2}\sum_{m=1}^{q+1}m^2\big(\binom{q+1}{m}(\frac{1}{q})^m(1-\frac{1}{q})^{q+1-m}\big)q^2=2+\frac{2}{q},
\end{align*}
the last equality being given by the calculation of the second term of (\ref{var3}) in the proof of Proposition \ref{Esp}.\\
Then we determine the variance of $V$:
\begin{align*}
Var(V)&=Var(\frac{1}{q^2}\sum_{m=0}^{q+1}m^2X_m)=E\big((\frac{1}{q^2}\sum_{m=0}^{q+1}m^2X_m)^2\big)-E(V)^2\\
&=\frac{1}{q^4}\sum_{(i,j)\in\{0,...,q+1\}^2}\sum_{B\in\Omega}\big(\sum_{P\in{\mathbb{F}_q}^2}i^2f_{i,P}(B)\sum_{Q\in{\mathbb{F}_q}^2}j^2f_{j,Q}(B)\big)
\p\{B\}\\
&-(2+\frac{2}{q})^2\\
&=\frac{1}{q^4}\sum_{(i,j)\in\{0,...,q+1\}^2}\sum_{P,Q\in{\mathbb{F}_q}^2}i^2j^2\p\{f_{i,P}=f_{j,Q}=1\}-(2+\frac{2}{q})^2.
\end{align*}
Using the same reasoning as in Proposition \ref{Esp''}, we have:
\begin{equation}\label{var4}
\begin{split}
Var(V)&=\frac{q^2-1}{q^2}\sum_{(i,j)\in\{0,...,q+1\}^2}i^2j^2\p\{f_{i,P}=f_{j,Q}=1\}\\
&+\frac{1}{q^2}\sum_{i\in\{0,...,q+1\}}i^4\p\{f_{i,P}=1\}-(2+\frac{2}{q})^2.
\end{split}
\end{equation}
The first term of (\ref{var4}) cuts into two parts. Knowing that:
\begin{center}
$i^2j^2=i_1i_2j_1j_2+3i_1i_2j_1+3i_1j_1j_2+9i_1j_1+i_1i_2+j_1j_2+3i_1+3j_1+1$, 
\end{center}
where $i_1=i-1$, $i_2=i-2$, $j_1=j-1$ and $j_2=j-2$, one checks that the first part is:
\begin{align*}
&(1-\frac{1}{q^2})\sum_{(i,j)\in\{0,...,q+1\}^2}i^2j^2\binom{q}{i-1,j-1,q-i-j+2}\frac{(q-2)^{q-(i-1)-(j-1)}}{q^{q+1}}\\
&=(1-\frac{1}{q^2})\big(\frac{q(q-1)(q-2)(q-3)}{q^5}+\frac{6q(q-1)(q-2)}{q^4}+\frac{11q(q-1)}{q^3}+\frac{6q}{q^2}+\frac{1}{q}\big)\\
&=\frac{25q^5-35q^4-2q^3+29q^2-23q+6}{q^6}.
\end{align*}
Knowing that $i^2j^2=i(i-1)j(j-1)+i(i-1)j+ij(j-1)+ij$, the second part is:
\begin{align*}
&(1-\frac{1}{q^2})\sum_{(i,j)\in\{0,...,q+1\}^2}i^2j^2\binom{q}{i, j, q-i-j}\frac{(q-1)(q-2)^{q-i-j}}{q^{q+1}}\\
&=(1-\frac{1}{q^2})\big(\frac{q(q-1)^2(q-2)(q-3)}{q^5}+\frac{2q(q-1)^2(q-2)}{q^4}+\frac{q(q-1)^2}{q^3}\big)\\
&=\frac{4q^6-17q^5+24q^4-4q^3-22q^2+21q-6}{q^6}.
\end{align*}
The second term of (\ref{var4}) is, thanks to the value of $\p\{f_{i,P}=1\}$ in the proof of \cite[Theorem 1.]{ste} (and with the help of the 
following equality $i^4=i(i-1)(i-2)(i-3)+6i(i-1)(i-2)+7i(i-1)+i$):
\begin{align*}
&\frac{1}{q^2}\sum_{i\in\{1,...,q+1\}}i^4\binom{q+1}{m}(\frac{1}{q})^m(1-\frac{1}{q})^{q+1-m}\\
&=\frac{1}{q^2}\big(\frac{(q+1)q(q-1)(q-2)}{q^4}+\frac{6(q+1)q(q-1)}{q^3}+\frac{7(q+1)q}{q^2}+\frac{q+1}{q}\big)\\
&=\frac{15q^3+6q^2-7q+2}{q^5}.
\end{align*}
Adding the different terms of (\ref{var4}), we obtain the expected result.
\end{proof}
We can deduce again from this proposition that, for all $B\in\Omega$, $V(B)=2+\frac{2}{q},$ which can also be written as:
\begin{equation}\label{equ3}
\forall B\in\Omega,\qquad\sum_{m=0}^{q+1}m^2x_m^B=2q(q+1).
\end{equation}

\begin{remark}
We can recover the Incidence Formula in \cite{Faber}, $|\tilde{B}|=\frac{q(q+1)}{2}+\sum_{m=1}^{q+1}\frac{(m-1)(m-2)}{2}x_m^B$, from equalities 
(\ref{equ2}) and (\ref{equ3}). Indeed, for $B$ a minimal Besicovitch arrangement we have:
\[|\tilde{B}|=\sum_{m=1}^{q+1}x_m^B=x_1^B+x_2^B+\sum_{m=3}^{q+1}x_m^B.\] 
We deduce from equalities (\ref{equ2}) and (\ref{equ3}): 
\begin{equation}\label{equ4}
x_1^B=\sum_{m=3}^{q+1}(m^2-2m)x_m^B\quad\text{and}\quad x_2^B=\frac{q(q+1)}{2}+\sum_{m=3}^{q+1}\frac{-m^2+m}{2}x_m^B.
\end{equation}
Then $|\tilde{B}|$ is equal to $\frac{q(q+1)}{2}+\sum_{m=3}^{q+1}\frac{m^2-3m+2}{2}x_m^B$.
\end{remark}

We note that the second equality in (\ref{equ4}) can also be found as follows. If all the intersection points between two lines of our $q+1$ lines 
arrangement are distinct, there will be $\binom{q+1}{2}$ points of multiplicity $2$. Let $i$ be an integer such that $3\le i\le q+1$. A point of 
multiplicity $i$ is a point through which $i$ lines pass. It implies that the number of points of multiplicity $2$ is reduced by $\binom{i}{2}$ for each of 
these points. Hence:
\begin{align*}
x_2^B=\binom{q+1}{2}-\sum_{i=3}^{q+1}\binom{i}{2}x_i^B.
\end{align*}

\subsubsection{Topological invariants in minimal Besicovitch arrangements in \texorpdfstring{${\mathbb{F}_q}^2$}{} and examples}

The three equalities ((\ref{equ1}), (\ref{equ2}) and (\ref{equ3})) associated with minimal Besicovitch arrangements show some of the constraints they 
are subject to:
\begin{corollary}\label{sys}
For $B\in\Omega$ : 
\[\left \{\begin{array}{l @{\enspace=\enspace} l}
    \sum_{m=0}^{q+1}x_m^B & q^2 \\
    \sum_{m=0}^{q+1}mx_m^B & q(q+1)\\
    \sum_{m=0}^{q+1}m^2x_m^B & 2q(q+1).
\end{array}
\right.\]
\end{corollary}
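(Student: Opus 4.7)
The corollary is simply the concatenation of the three equalities (\ref{equ1}), (\ref{equ2}), (\ref{equ3}) already produced, so the plan is to invoke the three preceding propositions and explain why the zero-variance statements immediately translate into pointwise equalities on $\Omega$.

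First, I would remark that $\Omega$ is a finite probability space in which every element carries the strictly positive weight $q^{-(q+1)}$. In such a setting, if a real-valued random variable $Y$ satisfies $\mathrm{Var}(Y)=0$, then $Y$ equals $E(Y)$ with probability one, and since the whole space has full support the equality $Y(B)=E(Y)$ must hold for every $B\in\Omega$. This observation will be applied to $S$, $\overline{M}$ and $V$ in turn.

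Applying Proposition \ref{Esp''} to $S=\frac{1}{q^2}\sum_{m=0}^{q+1}X_m$ gives $S(B)=1$ for every $B\in\Omega$, i.e.\ $\sum_{m=0}^{q+1}x_m^B=q^2$, which is (\ref{equ1}). Applying Proposition \ref{Esp} to $\overline{M}=\frac{1}{q^2}\sum_{m=0}^{q+1}mX_m$ yields $\overline{M}(B)=1+\frac{1}{q}$, hence $\sum_{m=0}^{q+1}mx_m^B=q(q+1)$, which is (\ref{equ2}). Finally, applying Proposition \ref{Esp'} to $V=\frac{1}{q^2}\sum_{m=0}^{q+1}m^2X_m$ gives $V(B)=2+\frac{2}{q}$, i.e.\ $\sum_{m=0}^{q+1}m^2x_m^B=2q(q+1)$, which is (\ref{equ3}).

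There is essentially no obstacle here; all the heavy lifting (the multinomial bookkeeping that shows the three variances vanish) has already been carried out inside the three propositions. The only substantive point to state cleanly in the proof is the general lemma that a random variable with zero variance on a finite probability space with full support is constant on the whole sample space, so that the three equalities hold for every $B\in\Omega$ and not merely almost surely.
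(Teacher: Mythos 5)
Your proposal is correct and follows essentially the same route as the paper: the author deduces each of (\ref{equ1}), (\ref{equ2}), (\ref{equ3}) immediately after Propositions \ref{Esp''}, \ref{Esp} and \ref{Esp'} respectively, precisely because zero variance forces $S$, $\overline{M}$ and $V$ to be constant, and the corollary merely collects the three equalities. Your only addition is to state explicitly the full-support lemma (every atom of $\Omega$ has probability $q^{-(q+1)}>0$, so an almost-sure equality holds for every $B\in\Omega$), a point the paper leaves implicit.
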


\begin{remark}
It may be noted that the values of $\sum_{m=0}^{q+1}m^3x_m^B$ are far from being always the same for $B$ in ${\mathbb{F}_q}^2$. 
\end{remark}

Let us point out the existence of two particular examples which will be useful in the remainder of this section. Here is the first one: 
\begin{example}\label{ex1}
It is the extreme case where all the lines of the minimal Besicovitch arrangement $B_0$ are concurrent: $B_0=\bigcup_{i\in{\mathbb{F}_q}\cup\{\infty\}}l(i,0)$ for example. 
We have $x_1^{B_0}=q^2-1$, $x_{q+1}^{B_0}=1$ and $x_i^{B_0}=0$ if $i\neq1,q+1$.
\end{example}
Faber produces the second one in \cite[Example.]{Faber}: 
\begin{example}\label{ex2}
It is the minimal Besicovitch arrangement defined as below:\\
$B_1=\big(\bigcup_{i\in{\mathbb{F}_q}}l(i,-i^2)\big)\cup l(\infty,0)$. \\
If $q$ is odd, $x_0^{B_1}=\frac{(q-1)^2}{2}$, $x_1^{B_1}=\frac{3q-3}{2}$, $x_2^{B_1}=\frac{q^2-2q+3}{2}$, $x_3^{B_1}=\frac{q-1}{2}$ and $x_i^{B_1}=0$ if 
$i\neq0,1,2,3$. \\
If $q$ is even, $x_0^{B_1}=\frac{q(q-1)}{2}$, $x_1^{B_1}=0$, $x_2^{B_1}=\frac{q(q+1)}{2}$ and $x_i^{B_1}=0$ if $i\neq0,1,2$.
\end{example}

\subsection{Resulting inequalities}

The following proposition can be deduced from the foregoing:
\begin{proposition}
Let $m$ and $q$ be positive integers such that $m\le q+1$. We have: 
\begin{align*}
&max(\{x_m^B:B\in\Omega\})\le\frac{2q(q+1)}{m^2}\quad \text{for } m\ge2,\\
&max(\{x_0^B:B\in\Omega\})\le\frac{q(q-1)}{2} \quad\text{and}\quad max(\{x_1^B:B\in\Omega\})=q^2-1.
\end{align*}
Furthermore, as $q\rightarrow\infty$:
\begin{center}
$max(\{x_0^B:B\in\Omega\})\backsim 1/2q^2$.
\end{center}
\end{proposition}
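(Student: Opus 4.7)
The plan is to deduce each inequality from Corollary~\ref{sys} combined with non-negativity of the $x_m^B$, supplementing with Examples~\ref{ex1} and~\ref{ex2} where an explicit arrangement is needed. For $m \geq 2$, every summand in the third equality $\sum_{k=0}^{q+1} k^2 x_k^B = 2q(q+1)$ is non-negative, so in particular $m^2 x_m^B \leq 2q(q+1)$, which gives the stated bound at once.

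For $x_0^B$, I would substitute the expressions for $x_1^B$ and $x_2^B$ from (\ref{equ4}) into the first equality $\sum_m x_m^B = q^2$; after simplification one obtains
\[
x_0^B = \frac{q(q-1)}{2} - \sum_{m=3}^{q+1} \frac{(m-1)(m-2)}{2}\, x_m^B,
\]
which is just a rearrangement of Faber's Incidence Formula recalled in the remark above. Non-negativity of the remaining sum then yields $x_0^B \leq q(q-1)/2$.

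For $x_1^B$, Example~\ref{ex1} already exhibits $x_1^{B_0}=q^2-1$, so only the matching upper bound $x_1^B \leq q^2-1$ needs an argument. Subtracting (\ref{equ2}) from (\ref{equ3}) and halving gives the pair-counting identity $\sum_{m=2}^{q+1} \binom{m}{2} x_m^B = \binom{q+1}{2} \geq 1$; since $\binom{m}{2} \geq 1$ for every $m \geq 2$, this forces $\sum_{m\geq 2} x_m^B \geq 1$, and inserting into the first equality gives $x_0^B + x_1^B \leq q^2-1$, hence the claim.

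Finally, the asymptotic is a squeeze: the bound above gives $\max\{x_0^B : B\in\Omega\} \leq q(q-1)/2 \sim q^2/2$, while Example~\ref{ex2} supplies a minimal Besicovitch arrangement $B_1$ with $x_0^{B_1}$ equal to $(q-1)^2/2$ for odd $q$ and $q(q-1)/2$ for even $q$, both of which are $\sim q^2/2$. I do not anticipate serious obstacles here: each step reduces to choosing the right linear combination of the three equalities of Corollary~\ref{sys}, the only mildly delicate point being the pair-counting identity used for $x_1^B$, which essentially encodes the geometric fact that $q+1$ pairwise non-parallel lines must meet in at least one point.
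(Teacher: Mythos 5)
Your proposal is correct, and it follows the paper's overall strategy---everything is squeezed out of Corollary~\ref{sys} together with Examples~\ref{ex1} and~\ref{ex2}---but two of your micro-arguments genuinely differ from the paper's. For $x_0^B$ the paper proceeds in two steps: subtracting (\ref{equ1}) from (\ref{equ2}) gives $x_0^B+q=\sum_{m=2}^{q+1}(m-1)x_m^B$, and then (\ref{equ3}) minus (\ref{equ2}) gives $q(q+1)=\sum_{m=2}^{q+1}m(m-1)x_m^B\ge 2\sum_{m=2}^{q+1}(m-1)x_m^B=2(x_0^B+q)$, whence $x_0^B\le q(q-1)/2$. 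You instead derive the exact identity $x_0^B=\frac{q(q-1)}{2}-\sum_{m=3}^{q+1}\frac{(m-1)(m-2)}{2}x_m^B$, which checks out: substituting (\ref{equ4}) into (\ref{equ1}) yields the coefficient $-(m^2-2m)+\frac{m^2-m}{2}-1=-\frac{(m-1)(m-2)}{2}$ and the constant $q^2-\frac{q(q+1)}{2}=\frac{q(q-1)}{2}$. This is Faber's Incidence Formula in disguise and buys slightly more than the paper's chain of inequalities, since it exhibits the deficit exactly and shows the bound is attained precisely when $x_m^B=0$ for all $m\ge3$. For the upper bound $x_1^B\le q^2-1$, the paper argues geometrically that a minimal Besicovitch arrangement must contain a point of multiplicity at least $2$, so $x_1^B=q^2$ cannot occur; your pair-counting identity $\sum_{m=2}^{q+1}\binom{m}{2}x_m^B=\binom{q+1}{2}$, obtained by halving (\ref{equ3}) minus (\ref{equ2}), encodes that same geometric fact purely algebraically and keeps the entire proof inside the three equalities of Corollary~\ref{sys}. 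Your argument for $m\ge2$ and the asymptotic squeeze via Example~\ref{ex2} coincide with the paper's.
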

\begin{proof}
Let $B\in\Omega$ then we obtain thanks to equalities (\ref{equ1}) and (\ref{equ2}):
\[x_0^B+q=\sum_{m=2}^{q+1}(m-1)x_m^B.\]\\
Thanks to equalities (\ref{equ2}) and (\ref{equ3}), we get as well: 
\[q(q+1)=\sum_{m=2}^{q+1}m(m-1)x_m^B\ge2\sum_{m=2}^{q+1}(m-1)x_m^B=2(x_0^B+q).\]\\
Thus:
\begin{center}
$x_0^B\le\frac{q(q-1)}{2}.$
\end{center}
Example \ref{ex2} allows us to conclude that $max(\{x_0^B:B\in\Omega\})\backsim 1/2q^2$, as $q$ goes to infinity.

In addition, Example \ref{ex1} shows us that $max(\{x_1^B:B\in\Omega\})\ge q^2-1$. But the case where the number of simple points is equal to $q^2$ doesn't 
occur, because there is at least one point of multiplicity greater or equal to $2$ in a minimal Besicovitch arrangement. 

Eventually, for $B\in\Omega$ and for $2\le m\le q+1$, equality (\ref{equ3}) gives us: 
\begin{center}
$x_m^B\le\frac{2q(q+1)}{m^2}.$ 
\end{center}
\end{proof}
More precisely, we obtain the following two results. If $q$ is even, $max(\{x_0^B:B\in\Omega\})=q(q-1)/2$ (Example \ref{ex2}) and if $q$ is odd, $max
(\{x_0^B:B\in\Omega\})=(q-1)^2/2$ (see \cite[Proposition 7]{Blokhuis} and also Example \ref{ex2}).

For $B\in\Omega$, by considering only $x_0^B$, $x_1^B$ and $x_2^B$, we highlight three inequalities:
\begin{proposition}\label{ze}
For $B\in\Omega$:
\[\left \{\begin{array}{r @{\enspace} l}
    x_0^B+x_1^B+x_2^B &\le\enspace q^2 \\
    3x_0^B-x_2^B &\le\enspace q^2-2q \\
    3x_0^B+2x_1^B+x_2^B &\ge\enspace 2q^2-q.
\end{array}
\right.\]
\end{proposition}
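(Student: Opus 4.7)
The plan is to obtain each of the three inequalities as a consequence of Corollary \ref{sys}, by testing its three moment identities against a polynomial in $m$ whose sign pattern isolates the coefficients of $x_0^B$, $x_1^B$ and $x_2^B$ while leaving the remaining terms with a controlled sign.

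The first inequality is immediate from non-negativity: since every $x_m^B\ge 0$, dropping the terms with $m\ge 3$ in the identity $\sum_{m=0}^{q+1}x_m^B=q^2$ gives $x_0^B+x_1^B+x_2^B\le q^2$.

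For the second inequality, I would search for a quadratic $a(m)=c_0+c_1m+c_2m^2$ satisfying $a(0)=3$, $a(2)=-1$, and $a(m)\ge 0$ for every other $m\in\{1,3,4,\dots,q+1\}$. Imposing in addition $a(1)=a(3)=0$ forces $a(m)=(m-1)(m-3)$, which indeed vanishes at $m=1,3$ and is strictly positive for $m\ge 4$. Combining the three equalities of Corollary \ref{sys} then yields
\[\sum_{m=0}^{q+1}(m-1)(m-3)\,x_m^B=2q(q+1)-4q(q+1)+3q^2=q^2-2q,\]
and since every term with $m\neq 2$ on the left is non-negative, the sum is at least $3x_0^B-x_2^B$, giving the claimed bound. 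For the third inequality the analogous interpolation problem—values $3,2,1$ at $m=0,1,2$ and non-positive for $m\ge 3$—is solved by the linear function $a(m)=3-m$; the first two identities of Corollary \ref{sys} then produce
\[\sum_{m=0}^{q+1}(3-m)\,x_m^B=3q^2-q(q+1)=2q^2-q,\]
and the terms with $m\ge 3$ (which vanish at $m=3$ and are strictly negative for $m\ge 4$) contribute non-positively, so $3x_0^B+2x_1^B+x_2^B\ge 2q^2-q$.

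The only real work is spotting the correct test polynomials $(m-1)(m-3)$ and $3-m$; once these are identified, all three inequalities drop out directly from Corollary \ref{sys}, with no further ingredient required.
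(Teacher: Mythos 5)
Your proof is correct and is essentially the paper's argument in cleaner clothing: the paper routes through the intermediate identities (\ref{equ4}) and then simplifies, but its final displays are exactly your linear combinations, with $-m^2+4m-3=-(m-1)(m-3)$ and $m-3=-(3-m)$ appearing as the coefficients of $x_m^B$ for $m\ge 3$. Your test-polynomial formulation just makes the choice of combination of the three moment identities of Corollary \ref{sys} explicit; no substantive difference.
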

\begin{proof}
The first inequality derives directly from equality (\ref{equ1}).
The remaining two rely on equalities (\ref{equ4}): 
\begin{center}
$x_1^B=\sum_{m=3}^{q+1}(m^2-2m)x_m^B$ and $x_2^B=\frac{q(q+1)}{2}+\sum_{m=3}^{q+1}\frac{-m^2+m}{2}x_m^B$.
\end{center}
Indeed:
\begin{align*}
3x_0^B-x_2^B&=3(q^2-\sum_{m=1}^{q+1}x_m^B)-x_2^B=3q^2-3x_1^B-4x_2^B-3\sum_{m=3}^{q+1}x_m^B\\
&=q^2-2q+\sum_{m=3}^{q+1}(-m^2+4m-3)x_m^B\enspace\le\enspace q^2-2q\quad\text{as }m\ge3,
\end{align*}
\begin{align*}
3x_0^B+2x_1^B+x_2^B&=3(q^2-\sum_{m=1}^{q+1}x_m^B)+2x_1^B+x_2^B=3q^2-x_1^B-2x_2^B-3\sum_{m=3}^{q+1}x_m^B\\
&=2q^2-q+\sum_{m=3}^{q+1}(m-3)x_m^B\enspace\ge\enspace 2q^2-q\quad\text{as }m\ge3.
\end{align*}
\end{proof}
Regarding Example \ref{ex2}, the sharpness of the three inequalities of Proposition \ref{ze} can be emphasised: if $q$ is even, all of these three 
inequalities are sharp. 

Let us recall that, for $B\in\Omega$, $0\le x_i^B/q^2\le 1$ if $i=0,1,2$ as $|{\mathbb{F}_q}^2|=q^2$. Let us then place ourselves in the unit cube of 
$\mathbb{R}^3$ and consider the subset of points $(x,y,z)$ of $\mathbb{R}^3$, displayed in green in the figure below, defined by: 
\[\left \{\begin{array}{r @{\enspace} l}
    x+y+z &\le\enspace 1 \\
    3x-z &\le\enspace 1 \\
    3x+2y+z &\ge\enspace 2.
\end{array}
\right.\]
As, for a sufficiently large $q$, the terms $q$ or $2q$ of the inequalities of Proposition \ref{ze} can be neglected compared to $q^2$, all 
$(x_0^B/q^2,x_1^B/q^2,x_2^B/q^2)$, for $B\in\Omega$, are in the green subset with a $O(1/q)$ accuracy, as $q\rightarrow\infty$.

\begin{figure}[htbp]
\centering
\includegraphics[scale=0.7,viewport=150 120 500 400,clip=true]{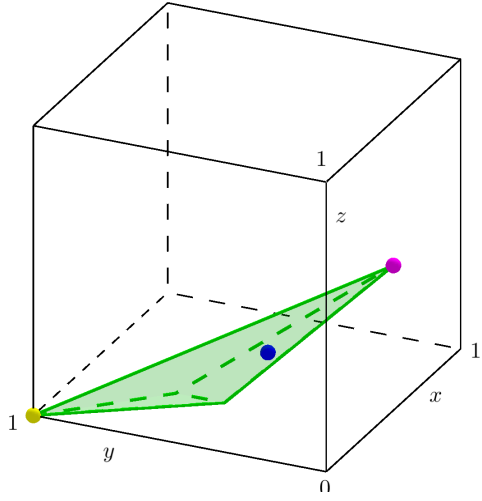}
\caption{The subset of the unit cube of $\mathbb{R}^3$ containing the points associated with the minimal Besicovitch arrangements.}
\end{figure}

Most minimal Besicovitch arrangements would be represented by points located near the blue point with coordinates $\big(1/e,1/e,1/(2e)\big)$, according 
to \cite[Theorem 1.]{ste}. The point associated with Example \ref{ex2} and that one associated with Example \ref{ex1} tend respectively to the 
mauve and the yellow point, as $q$ goes to infinity.

\section{Connections between \texorpdfstring{${\mathbb{F}_q}^2$}{} and \texorpdfstring{$\mathbb{R}^2$}{}}

In this section we will consider the definition of a finite incidence structure given in \cite{Pott} in a affine plane:
\begin{definition}
A finite incidence structure is a triple $(\mathcal{P},\mathcal{L},\mathcal{I})$, where $\mathcal{P}$ and $\mathcal{L}$ are two finite disjoint sets and 
$\mathcal{I}$ is a subset such that $\mathcal{I}\subseteq\mathcal{L}\times\mathcal{P}$. Elements in $\mathcal{P}$ are the points of the affine plane, 
elements in $\mathcal{L}$ are the lines, whereas $\mathcal{I}$ is the incidence relation.
\end{definition}

For $P\in\mathcal{P}$ and $L\in\mathcal{L}$, $(L,P)\in\mathcal{I}$ if and only if $P\in L$.

For $A$ an arrangement in an affine plane, we will use the following notation:
\begin{enumerate}[label=$\bullet$]
 \item $p_1(A)$ is the cardinality of $A$;
 \item $p_{01}(A)$ is the number of pairs consisting of a line of $A$ and one of its point of intersection;
 \item $x_i(A)$ is the number of points of multiplicity $i$ associated with $A$, for $i\ge2$;
 \item $p_0(A)$ is the number of points defined by $A$ (without counting multiplicity). 
\end{enumerate}
Note that $p_0(A)=\sum_{i=2}^{|A|}x_i(A)$.

Henceforth, let us define the following equivalence relation:
\begin{definition}
Let $A$ and $B$ be two arrangements in the plane consisting of at least two lines.\\
$(\mathcal{P}_A,\mathcal{L}_A,\mathcal{I}_A)$ is the incidence structure of $A$ and $(\mathcal{P}_B,\mathcal{L}_B,\mathcal{I}_B)$ is that of $B$. The 
equivalence relation $\mathcal{R}$ is defined by:
\[(\mathcal{P}_A,\mathcal{L}_A,\mathcal{I}_A)\mathcal{R}(\mathcal{P}_B,\mathcal{L}_B,\mathcal{I}_B)\enspace\Longleftrightarrow\enspace|A|=|B|\enspace
\text{and}\enspace\forall i\in\{2,...,|A|\},\text{ }x_i(A)=x_i(B).\]
\end{definition}
The equivalence class of $(\mathcal{P}_A,\mathcal{L}_A,\mathcal{I}_A)$ will be denoted by $[x_2(A),...,x_{|A|}(A)]$.

This being so, let us study the connections between ${\mathbb{F}_q}^2$ and $\mathbb{R}^2$, where $q$ is a prime power and ${\mathbb{F}_q}$ the $q$ elements 
finite field.

Let us denote by $\mathcal{S}_{{\mathbb{F}_q}^2}$ the set of incidence structures in ${\mathbb{F}_q}^2$ and $\mathcal{S}_{\mathbb{R}^2}$ the set of 
incidence structures in $\mathbb{R}^2$. Let $\Phi$ be the identity function from $\mathcal{S}_{{\mathbb{F}_q}^2}/\mathcal{R}$ to $\mathcal{S}_{\mathbb{R}^2}/
\mathcal{R}$ mapping an element of $\mathcal{S}_{{\mathbb{F}_q}^2}/\mathcal{R}$ to that one of $\mathcal{S}_{\mathbb{R}^2}/\mathcal{R}$ with the same 
notation.

Our aim is here to identify some connections between an arrangement $F$ of lines in ${\mathbb{F}_q}^2$ and an arrangement $A$ in $\mathbb{R}^2$, such 
that $A\in\Phi([x_2(F),...,x_{|F|}(F)])$.

To this end, we define quantities specific to ${\mathbb{F}_q}^2$ and $\mathbb{R}^2$. 

For $F$ an arrangement of lines in ${\mathbb{F}_q}^2$:
\begin{enumerate}[label=$\bullet$]
 \item $x_i(F)$ is the number of points of multiplicity $i$ associated with $F$, for $i\in\{0,1\}$.
\end{enumerate}

For $A$ an arrangement of lines in $\mathbb{R}^2$ (see \cite{Cartier}):
\begin{enumerate}[label=$\bullet$]
 \item $f_i(A)$ is the number of $i$-dimensional cells determined by $A$, for $i\in\{0,1,2\}$, where $0$-dimensional cells are the vertices of $A$, 
$1$-dimensional cells the edges of $A$ and $2$-dimensional cells the faces of $A$;
 \item $f_i^b(A)$ is the number of bounded $i$-dimensional cells for $i\in\{1,2\}$.
\end{enumerate}

There exist four relationships between the different mathematical quantities in $\mathbb{R}^2$ (see \cite{Cartier}), listed below:

\begin{proposition}\label{form}
For all arrangements $A$ of lines in $\mathbb{R}^2$: 
\begin{align*}
&f_1(A)=p_1(A)+p_{01}(A),\quad f_2(A)=1-p_0(A)+p_1(A)+p_{01}(A),\\
&f_1^b(A)=-p_1(A)+p_{01}(A),\quad f_2^b(A)=1-p_0(A)-p_1(A)+p_{01}(A).
\end{align*}
\end{proposition}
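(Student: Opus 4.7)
The formulas of Proposition \ref{form} are the classical incidence relations for line arrangements in $\mathbb{R}^2$ recorded in \cite{Cartier}, so my plan is to sketch how each can be checked from first principles. For $f_1(A) = p_1(A) + p_{01}(A)$, I would use a direct double count: if $k_\ell$ denotes the number of distinct intersection points of $A$ lying on the line $\ell \in A$, then $\ell$ is subdivided into exactly $k_\ell + 1$ one-dimensional cells, and summing over lines gives $\sum_\ell (k_\ell + 1) = p_{01}(A) + p_1(A)$, since $\sum_\ell k_\ell$ counts line--intersection-point incidences. Under the mild assumption that every line of $A$ meets another line, each $\ell$ contributes exactly two unbounded cells (the rays at its two ends), so there are $2p_1(A)$ unbounded edges in all, and $f_1^b(A) = f_1(A) - 2p_1(A) = p_{01}(A) - p_1(A)$.

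For the face counts I would pass to the one-point compactification $S^2 = \mathbb{R}^2 \cup \{\infty\}$, on which each line of $A$ becomes a simple closed loop through the new vertex $\infty$. The resulting cell complex on $S^2$ has $V = p_0(A) + 1$ and $E = p_{01}(A) + p_1(A)$: on each line, $\infty$ replaces the two rays by a single through arc, giving $k_\ell + 1$ arcs per line as before. Euler's formula $V - E + F = 2$ on the sphere then yields $F = 1 - p_0(A) + p_{01}(A) + p_1(A)$, and since $\infty$ is a vertex rather than a face, the faces of the spherical complex are precisely the faces of $A$ in $\mathbb{R}^2$; hence $f_2(A) = 1 - p_0(A) + p_1(A) + p_{01}(A)$.

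For $f_2^b(A)$, I would identify the unbounded faces of $A$ with those faces of the spherical complex incident to $\infty$. The $p_1(A)$ loops through $\infty$ contribute $2p_1(A)$ edge-ends there in cyclic order, so generically $\infty$ is incident to $2p_1(A)$ distinct faces, giving $f_2^b(A) = f_2(A) - 2p_1(A) = 1 - p_0(A) - p_1(A) + p_{01}(A)$. I expect the main obstacle to be the sector count at $\infty$ when $A$ contains parallel classes of lines: in the one-point compactification such lines arrive at $\infty$ from coincident tangent directions, potentially merging what would otherwise be distinct unbounded faces, and likewise forcing adjustments in the ``two rays per line'' bookkeeping of Step 1. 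The cleanest way around this is to compactify instead by the projective line at infinity, so that parallel lines meet at distinct points on $\ell_\infty$, and then to apply Euler's formula on $\mathbb{RP}^2$; the vertex, edge and face counts carry additional contributions from the $d$ parallel classes, but these cancel out cleanly and all four identities drop out uniformly.
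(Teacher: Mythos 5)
The paper never proves Proposition \ref{form}: the four identities are imported wholesale from \cite{Cartier} (``There exist four relationships\dots (see \cite{Cartier})''), so your sketch is not an alternative to an internal argument but a replacement for a citation --- and it is essentially the standard proof. The double count $f_1(A)=\sum_{\ell}(k_\ell+1)=p_{01}(A)+p_1(A)$ is correct, and so is the Euler computation on the one-point compactification: with $\infty$ a vertex one has $V=p_0(A)+1$, $E=p_1(A)+p_{01}(A)$, the faces on $S^2$ are exactly the planar faces, and $V-E+F=2$ gives $f_2(A)=1-p_0(A)+p_1(A)+p_{01}(A)$. One wording slip: if $\infty$ is a vertex, the two rays of a line do \emph{not} become ``a single through arc'' --- they become two arcs each ending at $\infty$; that is precisely why the count stays at $k_\ell+1$ arcs per line and $E=p_1+p_{01}$, so your numbers are right even though the description contradicts them. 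Your worry about parallel classes in the $f_2^b$ step also dissolves more simply than by passing to $\mathbb{RP}^2$: a face meeting $\infty$ in two distinct sectors is a convex region with two ends, hence a full strip between parallels, and as soon as $A$ contains two non-parallel directions no face is a strip (a line not parallel to the strip's direction crosses it); so the number of unbounded faces equals the number $2p_1(A)$ of edge-ends at $\infty$ \emph{exactly}, not merely generically --- the tangential arrival of parallel lines at $\infty$ affects neither the cell structure nor Euler's formula. Your projective alternative can be made to work ($\chi(\mathbb{RP}^2)=1$, antipodal unbounded faces merging in pairs), but ``these cancel out cleanly'' is asserted rather than shown and it brings its own verifications that faces are disks, so the spherical route you started with is the one to keep.

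One substantive point in your favour: the nondegeneracy hypothesis you introduce is genuinely necessary, and it is silently absent from the Proposition as stated. For a single line, $f_1^b=0\neq -1=-p_1+p_{01}$; for two parallel lines, $f_1^b=0\neq -2$ and $f_2^b=0\neq -1$; the two identities for $f_1$ and $f_2$ survive these degenerations but the bounded ones fail for any pencil of parallels. This is harmless where the paper applies the result, since there $p_0(A)=p_0(F)\geq 1$ forces at least two directions, hence every line of $A$ meets another; but your explicit caveat is a place where the blind proof is more careful than the source.
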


Let $F$ be an arrangement of $q+1$ lines in ${\mathbb{F}_q}^2$. Assuming henceforth the existence of an arrangement $A$ in $\mathbb{R}^2$, such that 
$A\in\Phi([x_2(F),...,x_{|F|}(F)])$.

One has $p_1(A)=p_1(F)=q+1$ (be $A\in\Phi([x_2(F),...,x_{q+1}(F)])$). We also have $p_0(F)=\sum_{i=2}^{q+1}x_i(F)=p_0(A)$, since for all $i\in\{2,...,q+1\}$ 
$x_i(A)=x_i(F)$. For a fixed point $P$ in $\mathbb{R}^2$ or in ${\mathbb{F}_q}^2$, the number of pairs consisting of a line of $A$ passing through $P$ 
and the point $P$ itself is by definition the multiplicity of this point. Therefore $p_{01}(A)$ and $p_{01}(F)$ are equal to the sum of the multiplicity 
of all points of intersection: $p_{01}(F)=\sum_{i=2}^{q+1}ix_i(F)=p_{01}(A)$. 

In this context, we can outline the following theorem:

\begin{theorem}\label{lienFR}
Let $F$ be an arrangement of $q+1$ lines in ${\mathbb{F}_q}^2$.\\
Let us assume that there exists an arrangement $A$ in $\mathbb{R}^2$, such that:
\begin{center}
$A\in\Phi([x_2(F),...,x_{|F|}(F)])$.
\end{center}
Then:
\begin{align*}
f_1^b(A)=(q+1)(q-1)-x_1(F)\enspace\text{and}\enspace f_2^b(A)=x_0(F).
\end{align*}
\end{theorem}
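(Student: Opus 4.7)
The plan is to read off $f_1^b(A)$ and $f_2^b(A)$ from Proposition \ref{form}, substitute the three identifications $p_1(A)=p_1(F)$, $p_0(A)=p_0(F)$, $p_{01}(A)=p_{01}(F)$ already established in the paragraph preceding the theorem, and then convert the resulting sums over the multiplicities $i\ge 2$ into expressions involving $x_0(F)$ and $x_1(F)$ by invoking the two first equalities of Corollary~\ref{sys} (i.e.\ equalities (\ref{equ1}) and (\ref{equ2})). No new combinatorial input is needed; everything reduces to bookkeeping between the indices $\{2,\dots,q+1\}$ (in which the incidence quantities $p_0,p_{01}$ are naturally expressed) and $\{0,1,\dots,q+1\}$ (in which the Besicovitch constraints live).

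First I would compute $f_1^b(A)$. By Proposition \ref{form}, $f_1^b(A)=-p_1(A)+p_{01}(A)=-(q+1)+\sum_{i=2}^{q+1} i\,x_i(F)$. Adding and subtracting the $i=1$ term gives $f_1^b(A)=-(q+1)-x_1(F)+\sum_{i=1}^{q+1} i\,x_i(F)$, and because the $i=0$ contribution vanishes this last sum equals $\sum_{i=0}^{q+1} i\,x_i^F=q(q+1)$ by equality (\ref{equ2}). Thus $f_1^b(A)=q(q+1)-(q+1)-x_1(F)=(q+1)(q-1)-x_1(F)$, as required.

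Next I would compute $f_2^b(A)$. Proposition \ref{form} gives $f_2^b(A)=1-p_0(A)-p_1(A)+p_{01}(A)=1-(q+1)-\sum_{i=2}^{q+1}x_i(F)+\sum_{i=2}^{q+1}i\,x_i(F)=-q+\sum_{i=2}^{q+1}(i-1)x_i(F)$. To evaluate the last sum I would subtract equality (\ref{equ1}) from equality (\ref{equ2}): this yields $\sum_{m=0}^{q+1}(m-1)x_m^F=q(q+1)-q^2=q$. Isolating the $m=0$ and $m=1$ terms (whose contributions are $-x_0(F)$ and $0$) gives $\sum_{m=2}^{q+1}(m-1)x_m^F=q+x_0(F)$. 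Plugging this back in yields $f_2^b(A)=-q+q+x_0(F)=x_0(F)$.

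I do not expect any genuine obstacle: the only thing to watch is that the incidence quantities $p_0,p_{01}$ of $A$ do not see points of multiplicity $0$ or $1$ (these are not intersection points of $A$), while the Besicovitch identities from Corollary \ref{sys} do range over all $m\in\{0,\dots,q+1\}$. The whole argument is therefore an index-tracking exercise combining Proposition~\ref{form} with the first two equalities of Corollary~\ref{sys}.
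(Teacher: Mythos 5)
Your proposal is correct and follows essentially the same route as the paper: both read $f_1^b(A)$ and $f_2^b(A)$ from Proposition~\ref{form}, substitute the identifications $p_1(A)=q+1$, $p_0(A)=\sum_{i=2}^{q+1}x_i(F)$, $p_{01}(A)=\sum_{i=2}^{q+1}ix_i(F)$, and close the computation with equalities (\ref{equ1}) and (\ref{equ2}); your intermediate regrouping into $\sum_{i=2}^{q+1}(i-1)x_i(F)$ is just a cosmetic variant of the paper's direct substitution. The only point worth making explicit, as the paper does parenthetically, is that (\ref{equ1}) and (\ref{equ2}) hold for \emph{any} arrangement of $q+1$ lines, not only minimal Besicovitch ones, since the theorem's hypothesis does not require distinct slopes.
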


\begin{proof}
According to Proposition \ref{form}, we know: $f_1^b(A)=-p_1(A)+p_{01}(A)=-(q+1)+\sum_{i=2}^{q+1}ix_i(F)$. Equality (\ref{equ2}) gives us: 
$f_1^b(A)=-(q+1)+q(q+1)-x_1(F)$ \textit{i.e.} the first equality of the theorem.

For the second equality, according to Proposition \ref{form} we have: $f_2^b(A)=1-p_0(A)-p_1(A)+p_{01}(A)=1-\sum_{i=2}^{q+1}x_i(F)-(q+1)+
\sum_{i=2}^{q+1}ix_i(F)$. From the equalities $\sum_{i=0}^{q+1}x_i(F)=q^2$ and $\sum_{i=0}^{q+1}ix_i(F)=q(q+1)$ (which are true for 
all sets of $q+1$ lines), we deduce that $f_2^b(A)=1-(q^2-x_0(F)-x_1(F))-(q+1)+(q(q+1)-x_1(F))=x_0(F)$.
\end{proof}

The first equality of Theorem \ref{lienFR} can be explained as follows. When we add a line to an arrangement, two cases appear. The first one is the 
case where each new point (resulting from the intersection of the new line and one of the others) leads at the same time to the appearance of a 
bounded one-dimensional cells for each of the two lines and the disappearance of a single point on each of these lines. The second one is the case where 
the new line passes through an already existing point; here, only one new bounded $1$-dimensional cell appears and one single point disappears 
(on the new line). This explains the coefficient $-1$ of $x_1(F)$ in the first equality of the theorem. In other words, $x_1(F)+f_1^b(A)$ is a constant 
for a given $q$. In the already mentioned extreme case of concurrence of all lines of a minimal Besicovitch arrangement $B$ (Example \ref{ex1}), we 
have: $x_1(F)=(q+1)\times(q-1)$ and $f_1^b(A)=0$. Thus we recover the first equality of Theorem \ref{lienFR}.

\begin{example}
We give an example where $q=5$. $\mathbb{Z}/5\mathbb{Z}$ being a field, we can use the following notations. To obtain a minimal Besicovitch arrangement 
$F$, we choose $6$ lines in $\mathbb{F}_5^2$ with distinct slopes. Their equations are: $y=0$, $y=x+1$, $y=2x+1$, $y=3x+2$, $4x+2$ and $x=0$.
The figure below shows us the different lines of $F$ in $\mathbb{F}_5^2$ (on the left) and one of its associated arrangement $A$ in $\mathbb{R}^2$, 
verifying $A\in\Phi([x_2(F),...,x_{|F|}(F)])$ (on the right):

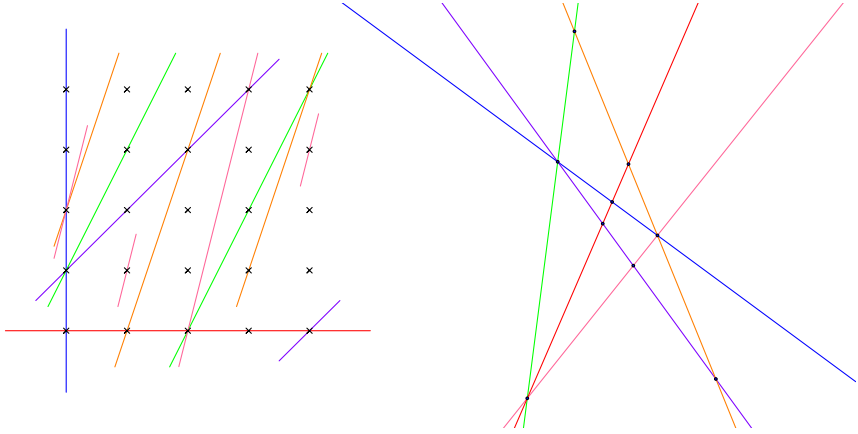
\begin{figure}[htbp]
\centering
\definecolor{ffqqqq}{rgb}{1.,0.,0.}
\definecolor{ffwwzz}{rgb}{1.,0.4,0.6}
\definecolor{ffxfqq}{rgb}{1.,0.4980392156862745,0.}
\definecolor{qqffqq}{rgb}{0.,1.,0.}
\definecolor{xfqqff}{rgb}{0.4980392156862745,0.,1.}
\definecolor{qqqqff}{rgb}{0.,0.,1.}
\begin{tikzpicture}[line cap=round,line join=round,>=triangle 45,x=0.8cm,y=0.8cm]
\clip(-2.42,-2.6) rectangle (11.98,4.42);\draw [color=xfqqff,domain=-2.42:11.98] plot(\x,{(--30.168-3.6*\x)/2.6});
\draw [color=qqffqq,domain=-2.42:11.98] plot(\x,{(--26.8536-3.92*\x)/-0.5});\draw [color=qqqqff,domain=-2.42:11.98] plot(\x,{(--11.5896-1.22*\x)/1.64});
\draw [color=ffwwzz,domain=-2.42:11.98] plot(\x,{(--22.3028-2.7*\x)/-2.14});\draw [color=ffqqqq,domain=-2.42:11.98] plot(\x,{(-20.968--2.8*\x)/1.2});
\draw [color=qqqqff] (-1.,-2.02)-- (-1.,4.);\draw [color=ffxfqq,domain=-2.42:11.98] plot(\x,{(--21.3104-2.38*\x)/0.96});
\draw [color=ffqqqq] (-2.,-1.)-- (4.,-1.);\draw [color=xfqqff] (-1.5,-0.5)-- (2.5,3.5);\draw [color=xfqqff] (3.5,-0.5)-- (2.5,-1.5);
\draw [color=qqffqq] (-1.3,-0.6)-- (0.8,3.6);\draw [color=qqffqq] (0.7,-1.6)-- (3.3,3.6);\draw [color=ffxfqq] (-1.2,0.4)-- (-0.133,3.601);
\draw [color=ffxfqq] (-0.2,-1.6)-- (1.533,3.599);\draw [color=ffxfqq] (3.2,3.6)-- (1.8,-0.6);\draw [color=ffwwzz] (0.15,0.6)-- (-0.15,-0.6);
\draw [color=ffwwzz] (0.8511764705882353,-1.595294117647059)-- (2.15,3.6);\draw [color=ffwwzz] (-0.65,2.4)-- (-1.2,0.2);
\draw [color=ffwwzz] (2.8494117647058825,1.3976470588235292)-- (3.148823529411765,2.5952941176470588);
\begin{scriptsize}
\draw [color=black] (-1.,3.)-- ++(-1.0pt,-1.0pt) -- ++(2.0pt,2.0pt) ++(-2.0pt,0) -- ++(2.0pt,-2.0pt);
\draw [color=black] (-1.,2.)-- ++(-1.0pt,-1.0pt) -- ++(2.0pt,2.0pt) ++(-2.0pt,0) -- ++(2.0pt,-2.0pt);
\draw [color=black] (0.,3.)-- ++(-1.0pt,-1.0pt) -- ++(2.0pt,2.0pt) ++(-2.0pt,0) -- ++(2.0pt,-2.0pt);
\draw [color=black] (0.,2.)-- ++(-1.0pt,-1.0pt) -- ++(2.0pt,2.0pt) ++(-2.0pt,0) -- ++(2.0pt,-2.0pt);
\draw [color=black] (1.,3.)-- ++(-1.0pt,-1.0pt) -- ++(2.0pt,2.0pt) ++(-2.0pt,0) -- ++(2.0pt,-2.0pt);
\draw [color=black] (2.,3.)-- ++(-1.0pt,-1.0pt) -- ++(2.0pt,2.0pt) ++(-2.0pt,0) -- ++(2.0pt,-2.0pt);
\draw [color=black] (3.,3.)-- ++(-1.0pt,-1.0pt) -- ++(2.0pt,2.0pt) ++(-2.0pt,0) -- ++(2.0pt,-2.0pt);
\draw [color=black] (-1.,1.)-- ++(-1.0pt,-1.0pt) -- ++(2.0pt,2.0pt) ++(-2.0pt,0) -- ++(2.0pt,-2.0pt);
\draw [color=black] (-1.,0.)-- ++(-1.0pt,-1.0pt) -- ++(2.0pt,2.0pt) ++(-2.0pt,0) -- ++(2.0pt,-2.0pt);
\draw [color=black] (-1.,-1.)-- ++(-1.0pt,-1.0pt) -- ++(2.0pt,2.0pt) ++(-2.0pt,0) -- ++(2.0pt,-2.0pt);
\draw [color=black] (0.,1.)-- ++(-1.0pt,-1.0pt) -- ++(2.0pt,2.0pt) ++(-2.0pt,0) -- ++(2.0pt,-2.0pt);
\draw [color=black] (0.,0.)-- ++(-1.0pt,-1.0pt) -- ++(2.0pt,2.0pt) ++(-2.0pt,0) -- ++(2.0pt,-2.0pt);
\draw [color=black] (0.,-1.)-- ++(-1.0pt,-1.0pt) -- ++(2.0pt,2.0pt) ++(-2.0pt,0) -- ++(2.0pt,-2.0pt);
\draw [color=black] (1.,2.)-- ++(-1.0pt,-1.0pt) -- ++(2.0pt,2.0pt) ++(-2.0pt,0) -- ++(2.0pt,-2.0pt);
\draw [color=black] (1.,1.)-- ++(-1.0pt,-1.0pt) -- ++(2.0pt,2.0pt) ++(-2.0pt,0) -- ++(2.0pt,-2.0pt);
\draw [color=black] (1.,0.)-- ++(-1.0pt,-1.0pt) -- ++(2.0pt,2.0pt) ++(-2.0pt,0) -- ++(2.0pt,-2.0pt);
\draw [color=black] (2.,0.)-- ++(-1.0pt,-1.0pt) -- ++(2.0pt,2.0pt) ++(-2.0pt,0) -- ++(2.0pt,-2.0pt);
\draw [color=black] (1.,-1.)-- ++(-1.0pt,-1.0pt) -- ++(2.0pt,2.0pt) ++(-2.0pt,0) -- ++(2.0pt,-2.0pt);
\draw [color=black] (2.,-1.)-- ++(-1.0pt,-1.0pt) -- ++(2.0pt,2.0pt) ++(-2.0pt,0) -- ++(2.0pt,-2.0pt);
\draw [color=black] (2.,1.)-- ++(-1.0pt,-1.0pt) -- ++(2.0pt,2.0pt) ++(-2.0pt,0) -- ++(2.0pt,-2.0pt);
\draw [color=black] (2.,2.)-- ++(-1.0pt,-1.0pt) -- ++(2.0pt,2.0pt) ++(-2.0pt,0) -- ++(2.0pt,-2.0pt);
\draw [color=black] (3.,2.)-- ++(-1.0pt,-1.0pt) -- ++(2.0pt,2.0pt) ++(-2.0pt,0) -- ++(2.0pt,-2.0pt);\draw [fill=qqqqff] (9.68,-1.8) circle (0.5pt);
\draw [color=black] (3.,1.)-- ++(-1.0pt,-1.0pt) -- ++(2.0pt,2.0pt) ++(-2.0pt,0) -- ++(2.0pt,-2.0pt);\draw [fill=qqqqff] (8.72,0.58) circle (0.5pt);
\draw [color=black] (3.,0.)-- ++(-1.0pt,-1.0pt) -- ++(2.0pt,2.0pt) ++(-2.0pt,0) -- ++(2.0pt,-2.0pt);\draw [fill=qqqqff] (6.58,-2.12) circle (0.5pt);
\draw [color=black] (3.,-1.)-- ++(-1.0pt,-1.0pt) -- ++(2.0pt,2.0pt) ++(-2.0pt,0) -- ++(2.0pt,-2.0pt);\draw [fill=qqqqff] (7.08,1.8) circle (0.5pt);
\draw [fill=qqqqff] (8.322928552023907,0.07902200488997495) circle (0.5pt);\draw [fill=qqqqff] (7.974742404227212,1.134398943196829) circle (0.5pt);
\draw [fill=qqqqff] (7.82055172413793,0.7746206896551727) circle (0.5pt);\draw [fill=qqqqff] (8.243463203463204,1.7614141414141442) circle (0.5pt);
\draw [fill=qqqqff] (7.355781313090528,3.96212549462974) circle (0.5pt);
\end{scriptsize}
\end{tikzpicture}
\caption{An example illustrating the connections between ${\mathbb{F}_q}^2$ and $\mathbb{R}^2$; here $q=5$.}
\end{figure}
\end{example}

We can deduce from Theorem \ref{lienFR} the following result:

\begin{remark}\label{formt}
Let $F$ be an arrangement of $q+1$ lines in ${\mathbb{F}_q}^2$.\\
Let us assume that there exists an arrangement $A$ in $\mathbb{R}^2$ such that:
\begin{center}
$A\in\Phi([x_2(F),...,x_{|F|}(F)])$.
\end{center}
Then:
\begin{align*}
f_1(A)=(q+1)^2-x_1(F)\enspace\text{and}\enspace f_2(A)=2(q+1)+x_0(F).
\end{align*}
Indeed, it is sufficient to note that, according to Proposition \ref{form}, $f_1(A)=f_1^b(A)+2p_1(A)$ and $f_2(A)=f_2^b(A)+2p_1(A)$. Theorem \ref{lienFR} 
and the fact that $p_1(A)=q+1$ enable us to conclude.
\end{remark}

As a result, we can define in ${\mathbb{F}_q}^2$, in view of Remark \ref{formt}, some quantities, equivalent to those defined in $\mathbb{R}^2$, associated with 
an arrangement $F$ of $q+1$ lines:
\begin{enumerate}[label=$\bullet$]
 \item $\tilde{f}_0(F)=\tilde{p}_0(F)$;
 \item $\tilde{f}_1(F)=(q+1)^2-\tilde{x}_1(F)$;
 \item $\tilde{f}_2(F)=2(q+1)+\tilde{x}_0(F)$.
\end{enumerate}

Euler's formula applied to an arrangement $A$ in $\mathbb{R}^2$ gives us: $(f_0(A)+1)-f_1(A)+f_2(A)=\chi(S^2)=2$, where $\chi(S^2)$ is the 
Euler-Poincaré characteristic of the sphere $S^2$ (see \cite{Cartier}). In ${\mathbb{F}_q}^2$, we obtain similarly, for all arrangements $F$ of $q+1$ 
lines:

\begin{equation}\label{EP}
(\tilde{f}_0(F)+1)-\tilde{f}_1(F)+\tilde{f}_2(F)=2.
\end{equation}

Indeed, we have:
\begin{center}
$(\tilde{f}_0(F)+1)-\tilde{f}_1(F)+\tilde{f}_2(F)=(\sum_{i=2}^{q+1}\tilde{x}_i(F)+1)-(q^2+2q+1-\tilde{x}_1(F))+(2q+2+\tilde{x}_0(F))$, 
\end{center}
equality (\ref{equ1}) allowing us to obtain the expected result.

Extending the notation above to all sets of lines, we obtain that this equality is still valid for all arrangements in ${\mathbb{F}_q}^2$. Indeed the 
proof of 
equality (\ref{EP}) relies only on equality (\ref{equ1}), which is always true.

\section*{Conclusion}

To conclude, equalities and inequalities brought to light in the first section enlarge the knowledge on minimal Besicovitch arrangements. In 
addition, in the second section, the values of $x_0^B$ and $x_1^B$, for an arrangement of $q+1$ lines $B$ in ${\mathbb{F}_q}^2$, the $q$ elements 
finite field, emerge from Theorem \ref{lienFR} through two new equalities implying a well-chosen arrangement in $\mathbb{R}^2$. This first value is 
useful in the two-dimensional version of the finite Kakeya problema; indeed, it is the number of points in ${\mathbb{F}_q}^2$ through which at least one 
line of a minimal Besicovitch arrangement $B$ passes (easy to determine from the value of $x_0^B$) which is expected in this issue (see \cite{Faber,
Blokhuis}). The second value, for its part, is required in the determination of the complexity of self-dual normal bases (see \cite{PV,ste}). 
Therefore, all these equalities and inequalities open up new prospects for future research in this two specific fields.


\bibliography{bib}

\end{document}